\documentclass[12pt]{article}
\textheight=20cm \textwidth=14cm

\usepackage[affil-it]{authblk}
\usepackage{amsmath} 
\usepackage{amssymb} 
\usepackage{makeidx} 
\usepackage{graphicx} 
\usepackage{enumerate}
\usepackage[utf8]{inputenc}
\usepackage{amsthm}
\usepackage{color}
\usepackage{cancel}

\theoremstyle{plain}
\newtheorem{theorem}{Theorem}[section]

\newtheorem{corollary}[theorem]{Corollary}

\theoremstyle{definition}

\newtheorem{remark}[theorem]{Remark}

\numberwithin{equation}{section}
\newtheorem{example}[theorem]{Example}




\long\def\symbolfootnote[#1]#2{\begingroup%
\def\thefootnote{\fnsymbol{footnote}}\footnote[#1]{#2}\endgroup}

\def\00{{\bf 0}}
\def\11{{\bf 1}}
\def\+{\oplus}


\newcommand{\boxtensor}{{\Box\kern-9.03pt\raise1.42pt\hbox{$\times$}}}

\newcommand{\F}{{\mathbb F}}

\newcommand{\cL}{\mathcal{L}}
\newcommand{\cO}{\mathcal{O}}


\newcommand{\Fq}{\mathbb{F}_q}
\newcommand{\Fp}{\mathbb{F}_p}

\newcommand{\be}{\begin{eqnarray}}
\newcommand{\ee}{\end{eqnarray}}

\bibliographystyle{elsarticle-num}

\begin{document}

\title{On the difference between permutation polynomials over finite fields}

\author{Nurdag\"{u}l Anbar$^1$, Almasa Od\u{z}ak$^2$, Vandita Patel$^3$, Luciane Quoos$^4$,  Anna Somoza$^{5,6}$, Alev Topuzo\u{g}lu$^7$
\vspace{0.4cm} \\
\small $^1$Otto-von-Guericke University Magdeburg, \\ 
\small Universit\"atsplatz 2, 39106 Magdeburg, Germany\\
\small Email: {\tt nurdagulanbar2@gmail.com}\\
\small $^2$University of Sarajevo,\\
\small Zmaja od Bosne 35, 71000 Sarajevo, Bosnia and Herzegovina\\
\small Email: {\tt almasa.odzak@gmail.com}\\
\small $^3$University of Warwick,\\
\small Coventry CV4 7AL, UK\\
\small Email: {\tt vandita.patel@warwick.ac.uk }\\
\small $^4$Universidade Federal do Rio de Janeiro, Cidade Universit\'{a}ria,\\
\small Rio de Janeiro, RJ 21941-909, Brazil\\
\small Email: {\tt luciane@im.ufrj.br}\\
\small $^5$Universitat Polit\`{e}cnica de Catalunya,\\
\small Calle Jordi Girona, 1-3, 08034 Barcelona, Spain\\
\small $^6$Leiden University ,\\
\small Snellius building, Niels Bohrweg 1 2300 RA Leiden, Netherlands\\
\small Email: {\tt anna.somoza@upc.edu }\\
\small $^7$Sabanc\i University,\\
\small MDBF, Orhanl\i, Tuzla, 34956 \. Istanbul, Turkey\\
\small Email: {\tt alev@sabanciuniv.edu}\\
 }

\date{}

\maketitle

\begin{abstract}
The well-known Chowla and Zassenhaus conjecture, proven by Cohen in 1990, states that  
if $p>(d^2-3d+4)^2$, then there is no complete mapping polynomial $f$ in 
$\Fp[x]$ of degree $d\ge 2$. 
For arbitrary finite fields $\Fq$,
a similar non-existence result is obtained recently by I\c s\i k, Topuzo\u glu and Winterhof
in terms of the Carlitz rank of $f$.

Cohen, Mullen and Shiue generalized the Chowla-Zassenhaus-Cohen Theorem significantly in 1995,
by considering differences of permutation polynomials. More precisely, they showed that if
$f$ and $f+g$ are both permutation polynomials of degree $d\ge 2$ over $\Fp$, with 
$p>(d^2-3d+4)^2$, then the degree $k$ of $g$ satisfies $k \geq 3d/5$, unless $g$ is constant. In this article, assuming
$f$ and $f+g$ are permutation polynomials in $\Fq[x]$, we give lower bounds for $k
$ in terms of the Carlitz rank of 
$f$ and $q$. Our results generalize the above mentioned result of I\c s\i k et al. 
We also show for a special class of polynomials $f$ of Carlitz rank $n \geq 1$ that if $f+x^k$ is a permutation of $\Fq$, with $\gcd(k+1, q-1)=1$, then $k\geq (q-n)/(n+3)$. 
\end{abstract}

\section{Introduction}

Let $\Fq$ be the finite field with $q=p^r$ elements, where $r\geq 1$ and $p$ is a prime. Throughout we assume $q \geq 3$.
We recall that $f \in \Fq[x]$ is a \textit{permutation polynomial} of $\Fq$ if it induces a bijection from $\Fq$ to $\Fq$. If $f(x)$ and $f(x)+x$ are both
permutation polynomials of $\Fq$, then $f$ is called a \textit{complete mapping}.
We refer the reader to \cite{NR} for a detailed study of complete mapping polynomials
over finite fields.
Their use in the construction of mutually orthogonal Latin squares is described, for instance, in \cite{LM98}.
For various other applications, see \cite{Muratovic2014, S00,SW10,SGCGM12}. The paper \cite{ITW} lists some recent work on complete mappings.

The Theorem 1 below was conjectured by Chowla and
Zassenhaus \cite{Chowla1968} in 1968, and proven by Cohen \cite{Cohen1990} in 1990. 

\textbf{Theorem 1.} 
If $d\geq2$ and $p>(d^2-3d+4)^2$, then there is no
complete mapping polynomial of degree $d$ over $\Fp$.

A significant generalization of this result was obtained by Cohen, Mullen and Shiue \cite{CMS} in 1995, and gives a lower bound for
the degree of the difference of two permutation polynomials in $\Fp[x]$ of the same degree $d$, when $p>(d^2-3d+4)^2$.

\textbf{Theorem 2.} 
Suppose $f$ and $f+g$ are monic permutation polynomials of $\Fp$ of degree $d \geq3$, where
$p>(d^2-3d+4)^2.$ If $\deg(g)=t\geq1$, then $t \geq 3d/5.$

An alternative invariant, the so-called Carlitz rank, attached to permutation polynomials, was used by I\c s\i k, Topuzo\u glu and Winterhof 
\cite {ITW} recently to obtain a non-existence result, similar to that in Theorem 1. The concept of Carlitz rank was first introduced in \cite {AAAW}. We describe it here briefly. The interested reader may see \cite{a} for details.

By a well-known result of Carlitz \cite{Carlitz} that any permutation polynomial of $\Fq$, with $q\ge 3$ is a composition of linear polynomials $ax+b$, $a,b \in \Fq$, $a\neq0$, and $x^{q-2}$, any permutation $f$ of $\Fq$ can be represented
by a polynomial of the form
\begin{equation}\label{pn}
P_{n}(x)=P_{n}(a_{0},...,a_{n+1};x)=\left(\ldots
\left(\left(a_0x+a_1\right)^{q-2}+a_2\right)^{q-2} \ldots +a_n\right)^{q-2}+a_{n+1},
\end{equation}
for some $n\geq 0$, where $a_i \neq 0$, for $i = 0,2,\ldots,n$. Note that $f(c)=P_{n}(c)$ holds for all $c \in \Fq$, however this representation is not unique,
and $n$ is not necessarily minimal. Accordingly 
the authors of \cite{AAAW} define the {\it Carlitz rank} of a 
permutation polynomial $f$ over $\Fq$ to be the smallest 
integer $n\ge 0$ satisfying $f=P_{n}$ for a permutation $P_{n}$ of the
form (\ref{pn}), and denote it by ${\rm Crk}(f)$.

The representation of $f$ as in (\ref {pn}) enables approximation of
$f$ by a fractional transformation in the following sense.

For $ 0 \leq k \leq n$, consider
\begin {equation}
\label{Rn}
{R}_k(x)= \frac{\alpha_{k+1}x+\beta_{k+1}}{\alpha_kx+\beta_k} \ ,
\end{equation}
where $\alpha_0=0, \alpha_1=a_0, \beta_0=1, \beta_1=a_1$ and, for $k \geq 2$, 
\begin{equation}\label{alpha}
\alpha_k = a_k\alpha_{k-1}+\alpha_{k-2}\hspace{.2in}\mbox{and}\hspace{.2in} \beta_k=a_k\beta _{k-1}+\beta_{k-2}\ .
\end{equation}
The set
\begin{equation}
\label{poles}
\cO_n=\left\{x_k:\;  x_k=\frac{-\beta_k}{\alpha_k}\ ,  \; k=1, \ldots,n \right\} \subset{ \mathbb{P}^1(\mathbb{F}_q)}=\mathbb{F}_q \cup \{ \infty \}\ ,
\end{equation}
is called the {\it set of poles} of $f$. The elements of $\cO_n$ may not be distinct. 

It can easily be verified that
\begin{equation}
\label{f=Rn} 
f(c)=P_{n}(c)=R_n(c)~~ {\rm for~ all ~~c} \in \Fq \setminus \cO_n \ . 
\end{equation}
Obviously, this property
is particularly useful when ${\rm Crk}(f)$ is small with respect to the field size.
The values that $f$ takes on $\cO_n$ can also
be expressed in terms of $R_n$, see \cite{a}. In case $\alpha_n=0$, i.e., the last \textit{pole} $x_n=\infty$, $R_n$ is linear. Following the terminology of \cite{ITW}, we define the \textit{linearity} of $f \in \F_q[x]$ as  
${\cal L}(f)=\max_{a,b\in \F_q} |\{ c\in \F_q : f(c)=ac+b\}|.$ Intuitively ${\cal L}(f)$ is large when $f$ is a permutation polynomial of $\Fq$ of ${\rm Crk}(f)=n, ~R_n$ is linear,
and $n$ is small with respect to $q$.

Now we are ready to state the main result of \cite{ITW}. We remark that the Theorems 1 and 2 hold over prime fields only, while the Theorem 3 is true for any finite field.

\textbf{Theorem 3.} If $f(x)$ is a complete mapping of $\F_q$ and 
${\cal L}(f)< \left\lfloor (q+5)/2\right\rfloor,$
then
${\rm Crk}(f) \ge \left\lfloor q/2\right\rfloor.$

The purpose of this note is to obtain a lower bound for the degree of the difference between two permutation polynomials, analogous to Theorem 2, generalizing Theorem 3. In what follows we assume that $f$ and $f+g$ are permutation polynomials of $\Fq$, where $g \in \Fq[x]$ has degree $k$ with $1 \leq k<q-1$.
We give lower bounds for $k$ in terms of $q$ and the Carlitz rank of $f$, see Theorems \ref{thm:main} and \ref{thm:monomial} below.

\section{Degree of the difference of two permutation polynomials}

Let $f$ be a permutation polynomial of $\Fq,~q \geq 3$, with ${\rm Crk}(f)=n\geq 1$.
Suppose that $f$ has a representation as in (\ref{pn}) and 
the fractional linear transformation $R_n$ in (\ref{Rn}), which is associated to $f$ 
as in (\ref{f=Rn}) is not linear, in other words $\alpha_n$ in (\ref{Rn}) defined as in (\ref{alpha}) is not zero. 
We denote the set of all such permutations by $\cL_1$, i.e., the set $\cL_1$ consists of all permutation polynomials of $\mathbb{F}_q$, satisfying ${\rm Crk}(f)=n\geq 1$ and $\alpha_n \neq 0$. Clearly $\cL(f) \leq n+2$, if $f \in \cL_1$.
We note that permutations $f\in \mathbb{F}_q[x]$ with $\alpha_n=0$ behave very differently. For instance, there are examples of complete mappings of $\mathbb{F}_q$ of Carlitz rank $4$ for infinitely many values of $q$. Indeed, the condition on the linearity of $f$ in Theorem $3$ corresponds to the case $\alpha_n=0$.
Therefore, we only consider permutations in $\mathcal{L}_1$.

We now prove our main theorem. 
\begin{theorem}\label{thm:main}
Let $f$ and $f+g$ be permutation polynomials of $\mathbb{F}_q$, where $\mathrm{Crk}(f)=n \geq 1$, $f \in \cL_1$, and the degree $k$ of $g \in \F_q[x] $ satisfies $1 \leq k<q-1$. Then 
\begin{equation} \label{eq:main}
nk+k(k-1)\sqrt{q} \geq q-\nu-n \ , \
\end{equation}
where $\nu=\mathrm{gcd}(k, q-1)$.  
\end{theorem}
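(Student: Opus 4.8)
The plan is to exploit that, since $f \in \cL_1$, the permutation $f$ coincides with the non-degenerate Möbius map $R_n$ off the pole set: $f(c) = R_n(c)$ for all $c \in \Fq \setminus \cO_n$, with $|\cO_n| \le n$ and $\alpha_n \neq 0$. Hence on $\Fq \setminus \cO_n$ the second permutation satisfies $(f+g)(c) = R_n(c) + g(c) =: \psi(c)$, where $\psi = N/D$ is a rational function with $D = \alpha_n x + \beta_n$ and $N = (\alpha_{n+1}x+\beta_{n+1}) + g(x)(\alpha_n x + \beta_n)$, so that $\deg N = k+1$ and $\psi$ has degree $k+1$. Because $f+g$ is a permutation, $\psi$ is injective on $\Fq \setminus \cO_n$; therefore any two distinct elements of $\Fq$ on which $\psi$ agrees must include a pole.

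First I would encode these forbidden coincidences as $\Fq$-points of a plane curve. Writing $R_n(x_1) - R_n(x_2) = \Delta(x_1-x_2)/\big((\alpha_n x_1 + \beta_n)(\alpha_n x_2 + \beta_n)\big)$ with $\Delta = \alpha_{n+1}\beta_n - \alpha_n\beta_{n+1} \neq 0$, and dividing the relation $\psi(x_1) = \psi(x_2)$ by $(x_1 - x_2)$, the collisions become the solutions of
\[
\Delta + g^{*}(x_1,x_2)\,(\alpha_n x_1 + \beta_n)(\alpha_n x_2 + \beta_n) = 0, \qquad g^{*}(x_1,x_2) = \frac{g(x_1)-g(x_2)}{x_1-x_2},
\]
a symmetric curve $\CC$ of total degree $k+1$ that does not contain the diagonal. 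The divided difference $g^{*}$ has degree $k-1$, so the degree-$(k+1)$ leading form is a nonzero multiple of $x_1 x_2\,(x_1^{k} - x_2^{k})/(x_1 - x_2)$; its $\Fq$-rational points at infinity are governed by $t^{k} = 1$, which is exactly where the term $\nu = \gcd(k,q-1)$ enters.

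Next I would count $\CC(\Fq)$ in two ways. On one hand, injectivity of $f+g$ forces every off-diagonal point of $\CC(\Fq)$ to have a coordinate in $\cO_n$; since fixing one coordinate to a pole leaves a degree-$k$ equation in the other, and $|\cO_n| \le n$, this caps the number of admissible points by a quantity of the shape $nk + n$. On the other hand, the arithmetic genus of a degree-$(k+1)$ plane curve is at most $\binom{k}{2} = k(k-1)/2$, so if $\CC$ possesses an absolutely irreducible component defined over $\Fq$, the Hasse--Weil--Serre bound yields at least $q + 1 - k(k-1)\sqrt q$ points on it, minus the $O(\nu)$ points at infinity. Comparing the two counts gives $q - \nu - n \le nk + k(k-1)\sqrt q$, which is precisely (\ref{eq:main}); equivalently, one assumes (\ref{eq:main}) fails and derives a contradiction with injectivity.

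\textbf{Main obstacle.} The delicate step is guaranteeing that the Hasse--Weil \emph{lower} bound is genuinely available, i.e. that $\CC$ has an $\Fq$-rational absolutely irreducible component of the expected degree; a purely reducible configuration, or components defined only over an extension, could a priori carry too few $\Fq$-points. Handling the degenerate shapes of $g^{*}$, together with the exact accounting of the diagonal points, the points at infinity contributing $\nu$, and the pole contributions producing the constants $nk$ and $n$, is where the real work lies. The appearance of the Weil term $k(k-1)\sqrt q$ is reassuring, since it pins the relevant object down to a curve of degree $k+1$ and makes the whole strategy self-consistent.
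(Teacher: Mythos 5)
You follow the same route as the paper---parametrize collisions of $R_n+g$ by a plane curve of degree at most $k+1$, bound the number of off-diagonal $\Fq$-collisions above via injectivity and the pole set, and below via a Weil-type lower bound---but the step you yourself single out as the ``main obstacle'' is precisely the heart of the paper's proof, and you leave it open. That is a genuine gap: without an absolutely irreducible factor defined over $\Fq$, the Hasse--Weil lower bound simply does not apply, and a priori your curve $\CC$ could split into components conjugate over an extension of $\Fq$, each carrying only $O(k^2)$ rational points, which would destroy the whole count.

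The paper closes this gap with Cohen's theorem on exceptional rational functions (Theorem 5 of the cited 1970 paper of Cohen). Call $\ell/t\in\Fq(x)$ exceptional if $\Theta_{\ell/t}=\bigl(t(Y)\ell(X)-t(X)\ell(Y)\bigr)/(X-Y)$ has no absolutely irreducible factor in $\Fq[X,Y]$; Cohen's theorem says every exceptional function permutes $\Fq$, so in particular its denominator $t$ has no zero in $\Fq$. The paper first translates $x=z+d$ so that the collision function becomes $H_n(x)=h(x)+a-\tilde b/x$ with $h(x)=g(x-d)$ and $\tilde b=ad-b\neq 0$; its collisions agree with those of $\ell/t=(xh(x)-\tilde b)/x$, whose denominator visibly vanishes at $0\in\Fq$. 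Hence $\ell/t$ is \emph{not} exceptional, so $\Theta_{\ell/t}=xy\,(h(x)-h(y))/(x-y)+\tilde b$---your curve $\CC$ in shifted coordinates---has an absolutely irreducible factor $p(x,y)$ over $\Fq$ of degree $\varrho\le k+1$, to which the bound $N\ge q+1-(\varrho-1)(\varrho-2)\sqrt q\ge q+1-k(k-1)\sqrt q$ applies; the exclusions (no affine points with $xy=0$, at most $\nu+2$ points at infinity, at most $k+1$ diagonal points by Bezout) then go through as you sketched. A secondary problem: your upper bound on collisions is too loose for the stated constants. Counting ordered pairs by ``one coordinate is a pole, at most $k$ choices for the other'' gives $2nk$, not the $nk+n$ you claim; the paper instead counts fibers: injectivity of $G_n$ off $\cO_n$ forces, for each value $u$ with $n_u$ preimages, at least $n_u-1$ of them into $\cO_n\setminus\{-d\}$, whence $\sum_u(n_u-1)\le n-1$ and $\mu=\sum_u n_u(n_u-1)\le(k+1)(n-1)$, which is exactly what yields the inequality of the theorem.
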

\begin{proof}

Since $f \in \cL_1$, there exist $a,b, d\in \mathbb{F}_q $, such that $f(z)=R_n(z)$
for $ z \in \mathbb{F}_q \setminus \mathcal{O}_n$, where
\begin{equation*}
R_n(z)=\frac{az+b}{z+d} \ . 
\end{equation*}
The fact that $ad-b\neq 0$ follows from (\ref{alpha}).

The polynomial $f(z)+g(z)$ can be represented by $G_n(z)= R_n(z)+g(z)$ for $ z \in \mathbb{F}_q \setminus \mathcal{O}_n$.
Since $f+g$ is a permutation of $\Fq$, the map $G_n$ is injective on $\mathbb{F}_q \setminus \mathcal{O}_n$. 

For $u\in \mathbb{F}_q$ and
\begin{equation}\label{eq=perm}
G_n(z)=\frac{az+b}{z+d}+g(z)=u \ , \ 
\end{equation}
we set
\begin{equation*}
H_n(x):=G_n(x-d)=\frac{ax-\tilde{b}}{x}+h(x)=u \ . \ 
\end{equation*}
where $\tilde{b}=ad-b$ and $h(x)=g(x-d)$.
Note that $H_n(x)=u$ for some nonzero $x\in \mathbb{F}_q$ if and only if
$z \neq -d$ is a solution of Equation \eqref{eq=perm}.
Let $S$ be the set of pairs $(x,y) \in \Fq^* \times \Fq^*$ such that 
\begin{align*}
S=\left\lbrace \, (x,y) \, : \; x\neq y \;\; \text{and} \;\; H_n(x)=H_n(y)\; \right\rbrace \ .
\end{align*}
Denote the value set of $H_n$ by $V_{H_n}$, i.e.,
$$V_{H_n}= \{u \in \Fq: \exists \, x \in \Fq \;\; \text{with} \;\; H_n(x)=u \}\ .$$ 
Suppose that the cardinality $|S|$ of $S$ is $\mu$.
For $u\in V_{H_n}$, we set $H_n^{-1}(u)=\{x \in \Fq: \; H_n(x)=u \}$ and put $n_u=|H_n^{-1}(u)|$.
We remark that $0\not\in H_n^{-1}(u)$ and that $x\in H_n^{-1}(u)$ if and only if $x$ is a root of the polynomial
\begin{align}\label{eq:xh(x)}
xh(x)+(a-u)x-\tilde{b} \ .
\end{align}
This shows that for any $u\in V_{H_n}$ we have $n_u\leq k+1$ as the polynomial in Equation \eqref{eq:xh(x)} has degree $k+1$.
We then conclude that 
\begin{align}\label{eq:mu}
\mu=\sum_{u\in V_{H_n}} n_u(n_u-1) \leq (k+1)\sum_{u\in V_{H_n}} (n_u-1) \ .
\end{align}
If there exist $n_u$ distinct elements $x$ with $H_n(x)=u$, then there exist $n_u$ distinct elements $z$ with $G_n(z)=u$. 
Since $G_n(z)$ is injective on $\mathbb{F}_q \setminus \mathcal{O}_n$, this shows that $n_u-1$ distinct elements $z$ lie in the set of poles $O_n$. In particular, by Equation \eqref{eq:mu} and the fact that $-d\in O_n$ we conclude that
\begin{align*}
n\geq |O_n| \geq 1+ \sum_{u\in V_{H_n}} (n_u-1) \geq 1+\frac{\mu}{k+1} \ . 
\end{align*}
Therefore in order to obtain a lower bound for $k$ in terms of $q$ and $n$, it is sufficient to determine $\mu$ in relation to $q$ and $k$.

We can re-write the equation $H_n(x)=H_n(y)$ as 
\begin{equation*} 
y(xh(x)-\tilde{b})-x(yh(y)-\tilde{b})=0 \ .
\end{equation*}
Note that $x-y$ is a factor of $y(xh(x)-\tilde{b})-x(yh(y)-\tilde{b})$.
We want to find an absolutely irreducible factor over $\mathbb{F}_q$ of the polynomial in two variables of degree $k+1$ defined by
\begin{equation*}\label{eq:new1}
\frac{y(xh(x)-\tilde{b})-x(yh(y)-\tilde{b})}{x-y} \ ,
\end{equation*}
or equivalently defined by
\begin{equation}\label{eq:new12}
xy\frac{h(x)-h(y)}{x-y}+\tilde{b} \ .
\end{equation}
  
We recall that a rational function $\ell(x)/t(x)\in \mathbb{F}_q(x)$ is called \textit{exceptional} over $\mathbb{F}_q$ if the polynomial $\Theta_{\ell/t}$, defined by
\begin{equation*}
\Theta_{\ell/t}:=\frac{t(Y)\ell(X)-t(X)\ell(Y)}{X-Y} 
\end{equation*}
has no absolutely irreducible factor in $\mathbb{F}_q[X,Y]$. 
By Theorem 5 of 
\cite{C1970}, $\ell/t$ is a permutation of $\mathbb{F}_q$ if it is an exceptional function over $\mathbb{F}_q$.
In particular, $t(\alpha)\neq 0$ for all $\alpha \in \mathbb{F}_q$. Now we put $\ell/t=(xh(x)-\tilde{b})/x$, and
conclude that the rational function in Equation \eqref{eq:new12} has an absolutely irreducible factor $p(x,y)$ over $\mathbb{F}_q$.

Consider the curve $\mathcal{X}$ whose affine equation is given by $p(x,y)$ of degree $\varrho \leq k+1$. Then by \cite[Theorem 9.57]{HKT2013} we conclude that the number of rational points $N(\mathcal{X})$ in $PG(2, q)$ of $\mathcal{X}$ is bounded by
\begin{equation*}
N(\mathcal{X})\geq q+1-(\varrho-1)(\varrho-2)\sqrt{q}\geq q+1-k(k-1)\sqrt{q} \ .
\end{equation*}

We denote by $P(X,Y,Z)$ the homogenized polynomial of $p(x,y)$, i.e., 
\begin{equation*}
P(X,Y,Z)=Z^{\varrho}p\left( \frac{X}{Z}, \frac{Y}{Z}\right) \ .
\end{equation*}
In order to find the number of affine solutions $(x :y: 1)$ such that $xy\neq 0$ and $x\ne y$, we proceed as follows. 
From Equation \eqref{eq:new12} we have that $P(X,Y,Z)$ is a divisor of the homogeneous polynomial
\begin{equation}\label{eq:homo}
XYZ^{k-1} \left( \frac{h(X/Z)-h(Y/Z)}{X-Y} \right)+\tilde bZ^{k+1} \ .
\end{equation}
Hence we conclude that there is no affine solution $(x :y:1)$ of $P(X,Y,Z)$ with $xy=0$. 
We now estimate the number of rational points of $\mathcal{X}$ at infinity, i.e., the points of the form $(x :y:0)$ for $x,y \in \mathbb{F}_q$. By Equation \eqref{eq:homo} the point $(x :y:0)$ is on $\mathcal{X}$ only if
\begin{equation*}
xy \frac{x^k-y^k}{x-y}=0 \ .
\end{equation*} 
This holds only if $(x :y:0)=(0 :1:0),(1 :0:0)$ or $x^k=y^k$ for some $x,y \in \mathbb{F}_q^*$. Since $\nu=\gcd (k, q-1)$, the equality $x^k=y^k$ is satisfied if and only if $x/y$ is an $\nu$-th root of unity in $\mathbb{F}_q$. Hence there exist at most $\nu+2$ rational points of $\mathcal{X}$ lying at infinity. 

Bezout's theorem implies that there are at most $k+1$ rational points $(x:y:z)$ of $\mathcal{X}$ with $x=y$, 
since the degree of $\mathcal{X}$ is at most $k+1$.

This shows that the cardinality $\mu$ of the set $S$ satisfies
\begin{equation*}
\mu  \geq q+1-k(k-1)\sqrt{q}-(\nu+k+2)\ .
\end{equation*}
Note that we subtract $\nu+k+2$ instead of $\nu+k+3$. This is because of the point $(1:1:0)$. If $(1:1:0)$ is on $\mathcal{X}$ then it is taken into account twice. If it is not on $\mathcal{X}$ then we do not have to exclude it as a point at infinity. Therefore $\mathrm{Crk}(f)=n$ satisfies 
\begin{align*}
n&\geq 1+\frac{1}{k+1}(q+1-k(k-1)\sqrt{q}-(\nu+k+2))\\
& =\frac{1}{k+1}(q-k(k-1)\sqrt{q}-\nu) \ ,
\end{align*}
which implies the desired result.
\end{proof}

For $k=1$ (and hence $\nu=1$) we obtain Theorem 3, i.e., the main result in \cite{ITW}.
\begin{corollary}\label{casegen}
Let $f$ be a permutation polynomial of $\mathbb{F}_q$, with $\mathrm{Crk}(f)=n \geq 1$ and $f \in \cL_1$. If $n<(q-1)/2$, then $f$ is not a complete mapping.
\end{corollary}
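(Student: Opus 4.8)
The plan is to derive this statement as the special case $k=1$ of Theorem \ref{thm:main}, argued by contraposition. Suppose that $f$ is a complete mapping of $\mathbb{F}_q$; then both $f$ and $f+x$ are permutation polynomials. Taking $g(x)=x$, the difference $g$ has degree $k=1$, and since we assume $q\geq 3$ throughout, we have $1\leq k<q-1$. The remaining hypotheses of Theorem \ref{thm:main}, namely $\mathrm{Crk}(f)=n\geq 1$ and $f\in\cL_1$, are granted by assumption. Thus Theorem \ref{thm:main} applies to the pair $(f,f+g)$.

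Next I would substitute $k=1$ into the inequality \eqref{eq:main}. Here $\nu=\gcd(1,q-1)=1$, and the error term $k(k-1)\sqrt{q}$ vanishes identically. Consequently \eqref{eq:main} reduces to
\[
n \;\geq\; q-1-n,
\]
that is, $2n\geq q-1$, which is equivalent to $n\geq (q-1)/2$.

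Finally, this contradicts the standing hypothesis $n<(q-1)/2$, so $f$ cannot be a complete mapping, as claimed. There is no genuine obstacle in this argument, as it is a direct specialization of the main theorem; the only point worth verifying is that $g(x)=x$ is an admissible difference, i.e.\ that its degree $k=1$ falls in the allowed range $1\leq k<q-1$, which holds precisely because $q\geq 3$. The clean form of the bound stems from the fact that the quadratic term $k(k-1)\sqrt{q}$ is zero at $k=1$, so no Weil-type point-counting estimate is actually invoked in this case, and one recovers exactly Theorem 3 of \cite{ITW}.
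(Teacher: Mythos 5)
Your proof is correct and is exactly the paper's own derivation: the authors state the corollary as the specialization $k=1$ (hence $\nu=1$) of Theorem~\ref{thm:main}, under which \eqref{eq:main} collapses to $2n\geq q-1$, and your contrapositive reading of this with $g(x)=x$ is precisely what is intended. The only cosmetic difference is that you spell out the admissibility check $1\leq k<q-1$ (valid since $q\geq 3$), which the paper leaves implicit.
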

\begin{remark}
We note that the bound given in \eqref{eq:main} is non-trivial only when $q\geq k(k-1)\sqrt{q}+k+\nu+1 $.
\end{remark}

\section{The case $g(x)=cx^k$}

Throughout this section we focus on monomials $g(x)=cx^k\in \Fq[x]$ and $f\in \cL_1$, where $x_n\in \cO_n$ in \eqref{poles} satisfies $x_n=0$. In this particular case, the lower bound in \eqref{eq:main} can be simplified significantly when $\gcd (k+1,q-1)=1$. Let $\cL_2$ be the set of $f\in \cL_1$ such that the last pole $x_n$ of $f$ is zero.

\begin{theorem}\label{thm:monomial}
Let $f(x)$ and $f(x)+ cx^k$ be permutation polynomials of $\mathbb{F}_q$, where $\mathrm{Crk}(f)=n \geq 1,
~f \in \cL_2,~1 \leq k < q-1 , ~c \in \Fq^*$. Put $m=\gcd (k+1,q-1)$. Then
\begin{equation*}
k(n+3)+(k-1)(m-1)\sqrt{q}\geq q-n \ .
\end{equation*}
In particular, if $m=1$, then $k\geq (q-n)/(n+3)$.
\end{theorem}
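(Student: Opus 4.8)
The plan is to follow the architecture of the proof of Theorem~\ref{thm:main} but to exploit the extra structure coming from $g(x)=cx^k$ and $x_n=0$ to replace the generic Bezout/Weil bound on the two-variable curve by a sharper count. First I would set up the same reduction: since $f\in\cL_2\subseteq\cL_1$, write $f(z)=R_n(z)=(az+b)/(z+d)$ off the poles $\cO_n$, and note that because the last pole is $x_n=-\beta_n/\alpha_n=0$ we have $d=0$, so $R_n(z)=(az+b)/z=a+b/z$ with $\tilde b=ad-b=-b\neq0$. Then $f+cx^k$ is represented by $G_n(z)=a+b/z+cz^k$, and as before $G_n$ is injective on $\Fq\setminus\cO_n$. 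Setting $H_n(x)=G_n(x)$ directly (no shift by $d$ is needed since $d=0$), the condition $H_n(x)=u$ becomes, after clearing the denominator, the vanishing of
\begin{equation*}
cx^{k+1}+(a-u)x-b \ ,
\end{equation*}
a polynomial of degree $k+1$, so again $n_u\le k+1$ and the same counting inequality $n\ge 1+\mu/(k+1)$ holds, where $\mu=|S|$ with $S$ the set of ordered pairs $(x,y)\in\Fq^*\times\Fq^*$, $x\ne y$, with $H_n(x)=H_n(y)$.

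Next I would compute $\mu$ more precisely. The equation $H_n(x)=H_n(y)$ reads $b/x+cx^k=b/y+cy^k$, i.e.
\begin{equation*}
b\,\frac{y-x}{xy}+c\,(x^k-y^k)=0 \ ,
\end{equation*}
and dividing by $x-y$ yields the affine curve
\begin{equation*}
c\,\frac{x^k-y^k}{x-y}-\frac{b}{xy}=0 \ , \quad\text{equivalently}\quad c\,xy\,\frac{x^k-y^k}{x-y}-b=0 \ .
\end{equation*}
The key simplification is that here the two-variable polynomial is the fully explicit symmetric polynomial $c\,xy\sum_{i=0}^{k-1}x^iy^{k-1-i}-b$ of total degree $k+1$, rather than the abstract $\Theta$-construction. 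As in Theorem~\ref{thm:main}, exceptionality of $(cx^{k+1}-b)/x$ (which holds because $f+cx^k$ is a permutation) guarantees an absolutely irreducible factor $p(x,y)$ over $\Fq$, giving a curve $\mathcal{X}$ of degree $\varrho\le k+1$ with $N(\mathcal{X})\ge q+1-(\varrho-1)(\varrho-2)\sqrt q\ge q+1-k(k-1)\sqrt q$ by the Hasse--Weil--Serre bound quoted from \cite[Theorem 9.57]{HKT2013}.

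The decisive gain is in the count of points to be discarded, and this is where the hypothesis $m=\gcd(k+1,q-1)$ enters. I would homogenize and analyze the points at infinity: a point $(x:y:0)$ lies on $\mathcal{X}$ only if $xy(x^k-y^k)/(x-y)=0$, giving $(0:1:0)$, $(1:0:0)$, and the solutions of $x^k=y^k$ with $x,y\ne0$; but the sharper input is to count the affine intersection points with $x=y$. Setting $x=y$ in $c\,x^2\cdot kx^{k-1}-b=0$ (since $(x^k-y^k)/(x-y)\to kx^{k-1}$) gives $ck\,x^{k+1}=b$, whose number of solutions in $\Fq^*$ is governed by $\gcd(k+1,q-1)=m$; likewise the relevant roots of unity controlling the points at infinity reduce to an $m$-dependent count rather than a $\nu$-dependent one. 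Combining these, one obtains $\mu\ge q+1-k(k-1)\sqrt q-(\text{error of size }O(m))$, and feeding this into $n\ge 1+\mu/(k+1)$ and rearranging produces
\begin{equation*}
k(n+3)+(k-1)(m-1)\sqrt q\ge q-n \ ,
\end{equation*}
with the case $m=1$ collapsing to $k(n+3)\ge q-n$. I expect the main obstacle to be the careful bookkeeping of the points one must subtract — precisely identifying which points lie at infinity, which have $x=y$, and which coincidences (such as the point $(1:1:0)$, as already flagged in the proof of Theorem~\ref{thm:main}) are double-counted or need not be excluded — so as to land the coefficient $(k-1)(m-1)\sqrt q$ exactly rather than a looser bound. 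The substantive number-theoretic point, namely that $m=\gcd(k+1,q-1)$ rather than $\nu=\gcd(k,q-1)$ controls these counts, follows from the degree-$(k+1)$ shape of $cx^{k+1}+(a-u)x-b$ forced by $x_n=0$, so isolating that dependence cleanly is the crux.
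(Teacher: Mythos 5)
Your setup is fine and agrees with the paper: since $x_n=0$ one has $R_n(z)=a+b/z$, the counting inequality $n\geq 1+\mu/(k+1)$ carries over verbatim, and the relevant affine curve is $c\,xy\,\frac{x^k-y^k}{x-y}-b=0$. The fatal problem is in the heart of your argument. You invoke the same mechanism as in Theorem~\ref{thm:main} (absolutely irreducible factor plus the Weil-type bound for a curve of degree $\leq k+1$), which can only ever give $\mu \geq q+1-k(k-1)\sqrt{q}-O(k+m)$, and you then assert that careful bookkeeping of the discarded points (diagonal points and points at infinity) will turn the coefficient $k(k-1)$ of $\sqrt{q}$ into $(k-1)(m-1)$. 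That is impossible: the discarded points number $O(k+m)$, a lower-order correction, whereas the difference between $k(k-1)\sqrt{q}$ and $(k-1)(m-1)\sqrt{q}$ is a main term. The acid test is $m=1$: Theorem~\ref{thm:monomial} then has no $\sqrt{q}$ term at all and forces $k\geq (q-n)/(n+3)$, linear in $q$ for bounded $n$, while your inequality $nk+k(k-1)\sqrt{q}+O(k+m)\geq q$ forces only $k\gtrsim q^{1/4}$. No boundary bookkeeping bridges that gap. (Two smaller slips: exceptionality means the \emph{absence} of an absolutely irreducible factor, and the paper's argument derives \emph{non}-exceptionality from the pole at $0$; also the points at infinity of $c\,xy\frac{x^k-y^k}{x-y}-b$ are still governed by $\gcd(k,q-1)$, exactly as in Theorem~\ref{thm:main}, not by $m$.)

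The idea you are missing — and the only place where $m=\gcd(k+1,q-1)$ genuinely enters — is a genus reduction, not a point count. The paper substitutes $(x,y)\to(xy,y)$, which puts the curve in the superelliptic form
\begin{equation*}
\mathcal{X}:\; y^{k+1}=\frac{b(x-1)}{cx(x^k-1)} \ ,
\end{equation*}
and then uses that $y\mapsto y^{(k+1)/m}$ is a bijection of $\Fq^*$ (because $\gcd((k+1)/m,\,q-1)=1$) to put the $\Fq^*\times\Fq^*$-points of $\mathcal{X}$ in bijection with those of the Kummer curve
\begin{equation*}
\mathcal{Y}:\; y^{m}=\frac{b(x-1)}{cx(x^k-1)} \ .
\end{equation*}
A ramification/Hurwitz computation in the function field of $\mathcal{Y}$ (the ramified places are $x=0$, $x=\infty$, and the $k-1$ roots $\alpha\neq 1$ of $x^k=1$, all tame) gives genus $(k-1)(m-1)/2$, so Hasse--Weil yields at least $q+1-(k-1)(m-1)\sqrt{q}$ rational places; this is the source of the coefficient you were trying to produce, and it degenerates to a rational curve — hence no $\sqrt{q}$ loss at all — exactly when $m=1$. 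Subtracting the $O(k)$ places over $x=0,\infty$ and the roots of $x^k=1$, together with the at most $3k+1$ excluded points of the form $(\gamma^2,\gamma)$ coming from the original diagonal, gives $\mu\geq q-(k-1)(m-1)\sqrt{q}-(4k+1)$, and then $n\geq 1+\mu/(k+1)$ rearranges exactly to $k(n+3)+(k-1)(m-1)\sqrt{q}\geq q-n$. Without this change of variables and the passage from degree $k+1$ to the Kummer exponent $m$, your argument can only reproduce a Theorem~\ref{thm:main}-type bound, not the statement at hand.
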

\begin{proof}
The condition $x_n=0$ implies that $\beta_n$ in \eqref{alpha} is zero. Hence we have $R_n(x)=\frac{ax+b}{x}$ for some $a,b\in \mathbb{F}_q$, with $b\neq 0$. That is, for $x \in \mathbb{F}_q\setminus \mathcal{O}_n $ we can represent $f+cx^k$ by $G_n(x)=R_n(x)+cx^k$. 

We proceed as in the proof of Theorem 2.1. 
The equation $G_n(x)=u$ for some $u\in \mathbb{F}_q$ becomes 
\begin{equation*}
\frac{ax+b}{x}+cx^k=u \ . 
\end{equation*} 
Then for some nonzero $x,y \in \mathbb{F}_q$, we have $G_n(x)=G_n(y)$ if and only if the equation
\begin{equation*}
cx^k+\frac{b}{x}=cy^k+\frac{b}{y} \ ,
\end{equation*}
or equivalently the equation
\begin{equation}\label{eq:curve1}
x^k-y^k=\frac{b}{c}\left( \frac{x-y}{xy}\right) 
\end{equation}
holds.

We again consider the set $S$ of pairs $(x,y) \in \Fq^* \times \Fq^*, ~x \neq y$, where $(x,y)$ is a solution of (\ref{eq:curve1}),
and denote the cardinality of $S$ by $\mu$. By using the argument given in the proof of Theorem 2.1, we conclude that $n \geq 1+\mu/(k+1)$. Hence our aim now is to express $\mu$ in terms of $q$
and $k$. 

Applying the change of variable $(x,y) \rightarrow (xy,y)$, Equation \eqref{eq:curve1} becomes 
\begin{equation*}
y^k(x^k-1)=\frac{b(x-1)}{cxy} \ .
\end{equation*}
Hence we are looking for the affine points $(x,y) \in \Fq^* \times \Fq^*$ of the curve
\begin{equation}\label{eq:curve2}
\mathcal{X}: \; y^{k+1}=\frac{b(x-1)}{cx(x^k-1)} \ .
\end{equation}
Note that in this case the solutions should not lie in the set $\lbrace(\gamma^2,\gamma)\, |\, \gamma\in \mathbb{F}_q \rbrace$.
Recall that $m=\gcd (k+1,q-1)$, hence the monomial $y^{(k+1)/m}$ gives rise to a permutation over $\Fq^*$. Hence there is one-to-one correspondence between the affine solutions in $(x,y) \in \Fq^* \times \Fq^*$ of the curve 
\begin{equation}\label{eq:curve3}
\mathcal{Y}: \; y^{m}=\frac{b(x-1)}{cx(x^k-1)} 
\end{equation}
and $\mathcal{X}$ given in Equation \eqref{eq:curve2}. Equation \eqref{eq:curve3} defines a Kummer extension. Then by using arithmetic of function fields, see \cite[Proposition 3.7.3 ]{sti}, we can estimate the number of $\mathbb{F}_q$-rational points of $\mathcal{Y}$ as follows.

For the rational function field $\mathbb{F}_q(x)$ and  $\alpha \in \mathbb{F}_q$, we denote by $(x=\alpha)$ and $(x=\infty)$ the places corresponding to the zero and the pole of $x-\alpha$, respectively. Let $F=\mathbb{F}_q(x,y)$ be the function field of $\mathcal{Y}$ defined by Equation \eqref{eq:curve3}. Then the ramified places of $\mathbb{F}_q(x)$ in $F$ are exactly 
\begin{equation*}
(x=0), \quad (x=\infty) \quad \text{and} \quad (x=\alpha) \quad \text{with} \quad \alpha^k=1 \; \text{and} \; \alpha \neq 1 \ .
\end{equation*}
It is clear that the places $(x=0)$ and $(x=\alpha)$ are totally ramified in $F$. In particular, this shows that the full constant field of $F$ is $\mathbb{F}_q$. For the place $(x=\infty)$ we have the ramification index $e_{\infty}=m/\gcd (m,k)=m$, since $m$ is a divisor of $k+1$. We conclude that the degree of the different divisor of $F/\mathbb{F}_q(x)$ is $(k+1)(m-1)$. Then by the Hurwitz genus formula the genus $g(F)$ of $F$ satisfies
\begin{equation*}
2g(F)-2=-2m+(k+1)(m-1) \ ,
\end{equation*}
which implies that $g(F)=(k-1)(m-1)/2$. By the Hasse--Weil theorem the number $N(F)$ of $\mathbb{F}_q$-rational places of $F$ is bounded by
\begin{equation}\label{eq:numberY}
N(F)\geq q+1-2g(F)\sqrt{q}=q+1-(k-1)(m-1)\sqrt{q} \ .
\end{equation}
We observe that the pole divisors $(x)_{\infty}$, $(y)_{\infty}$ of $x, y$ are 
\begin{align*}
(x)_{\infty}= mP_{\infty} \quad \text{and}\quad (y)_{\infty}= P_0+\sum_{\alpha^k=1  ,\alpha \neq 1 }P_{\alpha}\ ,
\end{align*}
where $P_{\infty}, P_0,P_{\alpha}$ are the unique places of $F$ lying over $(x=\infty),(x=0),(x=\alpha)$, respectively.

We remark that curve $\mathcal{Y}$ defined by Equation \eqref{eq:curve3} is of degree $k+m$ and has two points at infinity; namely $Q_1=(1:0:0)$ and $Q_2=(0:1:0)$. These are the only singular points of $\mathcal{Y}$ and $Q_1$ has intersection multiplicity $m$ while $Q_2$ is a $k$ ordinary multiple point. Moreover, $P_{\infty}$ is the unique place corresponding to $Q_1$, and there are $k$ places corresponding to $Q_2$, which correspond to the places lying in the support of $(y)_{\infty}$.
All the affine points in the curve $\mathcal{Y}$ defined by Equation \eqref{eq:curve3} are non singular and  there is a one to one correspondence between these points and the places in the function field $F$ of $\mathcal{Y}$ which does not lie in the support of pole divisors of $x$ and $y$. Moreover, the fact that the zero divisors of $x$ and $y$ are $(x)_{0}= mP_{0}$ and $(y)_{0}= kP_{\infty}$, respectively, implies that the rational places not lying in the pole divisors correspond to points $(x,y)\in \Fq^* \times \Fq^*$. Therefore we conclude, by Equation \eqref{eq:numberY}, the number of affine points $(x,y)\in \Fq^* \times \Fq^*$ of $\mathcal{Y}$ is at least $q-(k-1)(m-1)\sqrt{q}-k$.

Now we return the curve $\mathcal{X}$ given by Equation \eqref{eq:curve2}. We have seen that $\mathcal{X}$ has at least $q-(k-1)(m-1)\sqrt{q}-k$ many affine points $(x,y) \in \Fq^* \times \Fq^*$. Next we estimate the number of affine points $(x,y)$ of $\mathcal{X}$ such that $(x,y)$ is not of the form $(\gamma^2,\gamma)$ for some $\gamma \in \mathbb{F}_q$. By Equation \eqref{eq:curve2}, the affine point $(\gamma^2,\gamma)$ lies on $\mathcal{X}$ if and only if $\gamma$ is a root of
\begin{equation}\label{eq:T}
T^{k+1}\sum_{i=1}^{k}T^{2i}-\frac{b}{c} \ .
\end{equation}
Since polynomial in Equation \eqref{eq:T} has degree $3k+1$, there can be at most $3k+1$ such many points. 
Hence the number $\mu$ of affine solutions $(x,y) \in \Fq^* \times \Fq^*$ of Equation \eqref{eq:curve2}, which do not lie on the curve $x=y^2$ satisfies
\begin{align*}
\mu \geq q-(k-1)(m-1)\sqrt{q}-(4k+1)  \ .
\end{align*}
Therefore $\mathrm{Crk}(f)=n$ satisfies
\begin{equation*}
n \geq 1+\frac{1}{k+1}( q-(k-1)(m-1)\sqrt{q}-(4k+1)) \ ,
\end{equation*}
which gives the desired conclusion.
\end{proof}

\begin{example}\label{ex}
For $q=9$, $n=3$ and $m=1$, the bound in Theorem \ref{thm:monomial} gives $k\geq 1$. Combining with Corollary \ref{casegen} we get $k\geq 2$ as $q >2n+1$.
Let $\zeta$ be a primitive element of $\F_9$ and $f(x)=(((x+a)^7)+b)^7+c)^7$ be the permutation polynomial of Carlitz rank $3$, where $a=\zeta^5$, $b=\zeta^6$ and $c=\zeta^3$ over $\F_9$. It can be checked easily that the polynomial $f(x)+x^2$ is a permutation over $\mathbb{F}_9$.
\end{example}

\begin{remark}\label{direct} As we have seen in Example \ref{ex}, the bound in Theorem \ref{thm:monomial} is weaker than the one in Theorem \ref{thm:main} for $k=1$. The reason is the change of variable $(x,y) \rightarrow (xy,y)$ in the proof of Theorem \ref{thm:monomial}. However, a direct calculation in this specific case is possible, and gives an alternative proof for Theorem 3, which was proven in \cite{ITW}. In fact, the change of variable is not needed when $k=1$ as Equation \eqref{eq:curve1} becomes $xy=b$. In this case, each non-zero $x$ uniquely determines $y$, i.e., there exists $q-1$ distinct solutions $(x,y)$ of $xy=b$. We also leave out the solutions $(x,y)$ with $x=y$. We therefore obtain $\mu=q-2$ if $q$ is even, and $\mu=q-3$ or $q-1$ (depending on $b$ being square or not) if $q$ is odd. Then the fact that $n\geq 1+\mu/2$ implies Corollary 2.2.
\end{remark}

\section*{Acknowledgement}

The initial work on this project began during ``Women in Numbers Europe 2 (WIN-E2)" workshop, held in Lorentz Center, Leiden in September 2016.  The authors are grateful to Lorentz Center and all supporting institutions for making this conference and collaboration possible. They would especially like to thank the organisers of WIN-E2, Irene Bouw, Rachel Newton and  Ekin \"{O}zman for all of their hard work, as this resulted in an extremely fruitful and enjoyable meeting.

The authors N.A.; A.O.; V.P.; L.Q. and A.T. are partially supported by H.C.~\O rsted COFUND Post-doc Fellowship 
from the project ``Algebraic curves with many rational points"; Federal Ministry of Education and Science, 
grant No.05-39-3663-1/14; an EPSRC studentship; CNPq, PDE grant number  200434/2015-2 and TUB\.ITAK project 
number 114F432, respectively.

\end{document}